    \def\qed{\hfill$\sqcap\kern-8.0pt\hbox{$\sqcup$}$\\}
    \def\beq{\begin{eqnarray}}
    \def\eeq{\end{eqnarray}}
    \def\beqq{\begin{eqnarray*}}
    \def\eeqq{\end{eqnarray*}}
    \def\re{\textnormal {Re}}
    \def\im{\textnormal {Im}}
    \def\p{{\mathbb P}}
    \def\e{{\mathbb E}}
    \def\r{{\mathbb R}}
    \def\c{{\mathbb C}}
    \def\d{{\textnormal d}}
    \def\i{{\textnormal i}}
    \def\ind{{\mathbb I}}
    \def\mm{{\mathcal M}}
    \def\ee{{\textnormal e}}
\newtheorem{theorem}{Theorem}
\newtheorem{lemma}{Lemma}
\newtheorem{proposition}{Proposition}
\theoremstyle{definition}
\newtheorem{remark}{Remark}
\title{Asian options and meromorphic L\'evy processes}
\author{
D. Hackmann and A. Kuznetsov
\thanks{{Research supported by the
Natural Sciences and Engineering Research Council of Canada.}}  \\ \\
Dept. of Mathematics and Statistics\\  York University \\
4700 Keele Street 
\\Toronto, ON \\ M3J 1P3,  Canada 
 }
\date{}
\begin{document}

\maketitle

\begin{abstract}
\bigskip
One method to compute the price of an arithmetic Asian option in a L\'evy driven model is based on the exponential functional of the underlying L\'evy process: If we know the distribution 
of the exponential functional, we can calculate the price of the Asian option via the inverse Laplace transform. In this paper we consider pricing Asian options in a model driven by a general meromorphic L\'evy process. We prove that the exponential functional is equal in distribution
to an infinite product of indepedent beta random variables, and its Mellin transform can be expressed as an infinite product of gamma functions. We show that these results lead to an efficient algorithm for computing 
 the price of the Asian option via the inverse Mellin-Laplace transform, and we compare this method with some other techniques. 
\end{abstract}

{\vskip 0.5cm}
 \noindent {\it Keywords}: Asian option, meromorphic process, hyper-exponential process, exponential functional,  Mellin transform, gamma function  
{\vskip 0.5cm}
 \noindent {\it 2010 Mathematics Subject Classification }: 60G51, 65C50 

\newpage


\section{Introduction}


One of the popular models for stock price dynamics in mathematical finance 
is based on a geometric L\'evy process $S_t=S_0e^{X_t}$, where $X$ is a L\'evy process started from zero and $S_0$ is the initial stock price. 
In this model the interest rate $r$ is typically constant and it is assumed that the measure $\p$ is risk-neutral, which means that the process $S_t e^{-rt}$ is a martingale under measure $\p$.
We are interested in calculating the price of a fixed strike Asian option, which in this model would be given by
\beq\label{Asian_option}
C(S_0,K,T):=e^{-r T} \e\left[ \left(S_0\int_{0}^T e^{X_u} \d u - K\right)^+\right]. 
\eeq

In order to calculate $C(S_0,K,T)$ one could use either a Monte-Carlo simulation approach, or a
semi-analytical technique. We would like to emphasize two of these techniques, both of which have important connections with other branches of probability. The first technique is based on the observation that, 
while the process $Z_t:=\int_0^t \exp(X_u) \d u$ is not Markovian, the process 
\beqq
\tilde Z_t:=(x+Z_t)e^{-X_t}
\eeqq
does satisfy Markov property. The process $\tilde Z_t$ is known as a {\it generalized Ornstein-Uhlenbeck} process, and it is an important and well-studied object, see 
\cite{LS09} and the references therein. Since $\tilde Z$ is a Markov process, after a change of measure we can rewrite the expectation in \eqref{Asian_option} so that it involves only $\tilde Z_T$, and 
this can be computed by solving the backward Kolmogorov equation, which in this case takes the form of a partial integro-differential equation (one dimension for time and one for space variable). This approach was developed in a more general setting by Vecer and Xu \cite{Vecer_Xu}, and it was skillfully implemented by Bayraktar and Xing 
\cite{Bayraktar} for jump-diffusion processes. 

The second semi-analytical technique is based on the 
{\it exponential functional} of the L\'evy process $X$, which is defined for $q\ge 0$ as 
\beq\label{def_Iq}
I_q:=\int\limits_0^{\ee(q)} e^{X_u} \d u.
\eeq
Here $\ee(q)$ denotes an exponential random variable (independent of $X$) with mean $1/q$  if $q>0$, and 
$\ee(q)\equiv +\infty$ if $q=0$. In the latter case we need to impose an additional condition 
$\e[X_1]<0$ in order to ensure that the exponential functional $I_q$ is well-defined (see \cite[Theorem 1]{BYS}).

Exponential functionals are very important and useful objects, not only in mathematical finance but also 
in many other areas of probability theory. They play a role in 
such fields as self-similar Markov processes, 
random processes in random environment, fragmentation processes and branching processes. They are also connected with the generalized Ornstein-Uhlenbeck processes (their distribution appears as a stationary measure of the process). See
\cite{BYS} for an overview of this topic. The connection with Asian options is the following: The distribution of the exponential functional can be viewed as the Laplace transform in $t$-variable of the distribution of $Z_t$. By inverting this Laplace transform 
one can obtain the distribution of $Z_t$ and, therefore, the price of the Asian option. This approach was use in \cite{CaiKou2010} for L\'evy processes with hyper-exponential jumps (see also
\cite{Patie2009407} for general results on L\'evy processes with one-sided jumps), and this will be the approach that we will follow in the present paper. Our goal is to identify the distribution of the exponential functional for a large family of underlying L\'evy processes (having such desirable properties, as jumps of infinite activity or infinite variation) and to apply these results to pricing Asian options.

Let us say a few words on the current state of knowledge of the distribution of the exponential functional. 
This distribution is known explicitly for processes with hyper-exponential jumps \cite{CaiKou2010}, and for a slightly more general class of processes with jumps of rational transform \cite{Kuznetsov2012654} (the processes in both of these families can have jumps of finite activity only). The distribution of the exponential functional is also known for a family of hypergeometric processes, which was introduced recently in \cite{KuPa2010_prep}. This family includes processes with jumps of infinite activity or infinite variation, however in this case the distribution of $I_q$ is only known for a single value of $q>0$, which depends on the parameters of the underlying process $X$. To the best of our knowledge, there are no known examples of 
L\'evy processes having double-sided jumps of infinite activity or infinite variation, 
for which the distribution of $I_q$ can be identified explicitly for all $q>0$.

In this paper we fill this gap and we find the distribution of the exponential functional $I_q$
for a general meromorphic process. 
Meromorphic processes (which were introduced in \cite{KuzKyPa2011}) are defined via the density of the L\'evy measure
\beq\label{def_pi}
\pi(x)=\ind_{\{x>0\}} \sum\limits_{n\ge 1} a_n \rho_n e^{-\rho_n x}+ \ind_{\{x<0\}} \sum\limits_{n\ge 1} \hat a_n \hat \rho_n e^{\hat \rho_n x}.
\eeq 
All the coefficients  $a_n$, $\hat a_n$, $\rho_n$ and $\hat \rho_n$ are strictly positive and
the sequences $\{\rho_n\}_{n \ge 1}$ and $\{\hat \rho_n\}_{n\ge 1}$  must be strictly increasing and unbounded.
There are several families of meromorphic processes, for which the Laplace exponent $\psi(z):=\ln \e[\exp(zX_1)]$ can be computed explicitly, such as 
 beta, theta and hypergeometric processes, see \cite{Kuz2010a,Kuz2010b,KuKyPaSc2011, KuPa2010_prep}. 
 The family of meromorphic processes is quite large, in particular it is dense in the class of processes with completely-monotone L\'evy measure (see \cite{KuzKyPa2011}). At the same time, this family is a natural generalization of L\'evy processes
 with hyper-exponential jumps (we will call them  {\it hyper-exponential processes} from now on), for which the density of the L\'evy measure is also given by \eqref{def_pi}, with the two infinite series replaced by finite sums. 
While hyper-exponential processes can have only finite activity compound Poisson jumps, meromorphic processes
 can exhibit a wide range of path behavior, including jumps of infinite activity or infinite variation, which makes them good 
candidates for modeling purposes in mathematical finance. Meromorphic processes can be seen as analogues of the well-known models (VG, CGMY, KoBoL, NIG), but with richer analytical structure allowing for development of efficient 
semi-analytical numerical algorithms. See \cite{Madan2012,Castilla20122466} for some recent applications
of these processes in
mathematical finance.

The paper is organized as follows. In section \ref{section_inf_products} we study infinite products of independent beta random variables. We use these results in section \ref{section_exp_func_meromorphic} to
identify the distribution of the exponential functional of a general meromorphic process. In section \ref{section_numerics} we discuss numerical issues, such as approximating the Mellin transform of the exponential functional and computing the price of an Asian option..


\section{An infinite product of beta random variables}\label{section_inf_products}

 
 In this section we will study infinite products of independent beta random variables. These results 
will be used in section \ref{section_exp_func_meromorphic}, in order to describe the distribution of the exponential functional $I_q$.
 The results could also be of independent interest, as similar finite products of beta random variables appear in other areas of probability, for example, in the definition of the Poisson-Dirichlet distribution, 
see \cite{Pitman_Yor}. 
 
For $a,b>0$, let $B_{(a,b)}$ denote the beta random variable, having distribution
\beqq
\p ( B_{(a,b)} \in \d x)=\frac{\Gamma(a+b)}{\Gamma(a) \Gamma(b)} x^{a-1} (1-x)^{b-1} \d x, \;\;\; 0<x<1.
\eeqq 
With any two unbounded sequences ${\alpha}=\{\alpha_n\}_{n\ge 1}$ and 
${\beta}=\{\beta_n\}_{n\ge 1}$  which satisfy the interlacing property
  \beq\label{interlacing_property}
 0<\alpha_1 < \beta_1 < \alpha_2 < \beta_2 < \alpha_3 <\beta_3  \; \dots
 \eeq
  we associate an infinite product of independent beta random variables, defined as 
\beq\label{def_X}
 X(\alpha,\beta):=\prod\limits_{n\ge 1} B_{(\alpha_n,\; \beta_n-\alpha_n)} \frac{\beta_n}{\alpha_n}.
\eeq
The random variable $X(\alpha,\beta)$ is the main object of interest in this section. Our first task is to establish that 
this random variable is well-defined.

\begin{proposition}\label{prop_infinite_product}
 The infinite product in \eqref{def_X} converges a.s.
\end{proposition}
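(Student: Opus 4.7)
The plan is to recognize the partial products of \eqref{def_X} as a non-negative martingale and invoke Doob's convergence theorem. The key observation is that each factor has expectation exactly one: using $\e[B_{(a,b)}] = a/(a+b)$, we have
\beqq
\e\left[B_{(\alpha_n,\; \beta_n - \alpha_n)} \frac{\beta_n}{\alpha_n}\right] = \frac{\alpha_n}{\beta_n}\cdot \frac{\beta_n}{\alpha_n} = 1.
\eeqq

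First I would check that everything is well-defined. The interlacing property \eqref{interlacing_property} guarantees $\beta_n - \alpha_n > 0$, so each beta distribution has legitimate positive parameters, and the $n$-th factor is supported on the bounded interval $(0, \beta_n/\alpha_n)$. Let $P_N$ denote the $N$-th partial product and let $\f_N$ be the $\sigma$-algebra generated by the first $N$ beta random variables. Independence of the factors, combined with the mean-one property computed above, immediately gives $\e[P_{N+1} \mid \f_N] = P_N$, so $(P_N)_{N \ge 1}$ is a non-negative martingale with $\e[P_N] = 1$; in particular it is uniformly $L^1$-bounded.

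Doob's martingale convergence theorem then yields almost-sure convergence of $P_N$ to an integrable, non-negative random variable, which is precisely the assertion of the proposition.

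I do not anticipate any serious obstacle: the argument reduces to the algebraic identity that makes each factor have mean one, followed by a textbook invocation of martingale convergence. What the argument does \emph{not} provide is strict positivity of the limit --- a non-negative martingale can degenerate to zero even when every prefactor is strictly positive. If that finer statement were needed later, one would supplement with Kakutani's product theorem or with Kolmogorov's two-series theorem applied to $\sum \log X_n$, using the digamma asymptotics $\psi(x) = \log x - 1/(2x) + O(1/x^2)$ together with the telescoping bound $\sum (1/\alpha_n - 1/\beta_n) \le 1/\alpha_1$ coming from interlacing to control the relevant means and variances.
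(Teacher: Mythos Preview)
Your martingale argument is correct and is a genuinely different route from the paper's. The paper never invokes martingales: it passes to logarithms, computes
\beqq
\e\big[\ln B_{(\alpha_n,\beta_n-\alpha_n)}\big]=\psi(\alpha_n)-\psi(\beta_n),
\qquad
\textnormal{Var}\big[\ln B_{(\alpha_n,\beta_n-\alpha_n)}\big]=\psi'(\alpha_n)-\psi'(\beta_n),
\eeqq
and then uses the interlacing Lemma~\ref{lemmaMain} (applied to the monotone functions $\ln x-\psi(x)$ and $\psi'(x)$, both of which vanish at infinity) to show that the series of means and of variances converge. The Khintchine--Kolmogorov theorem then gives a.s.\ convergence of the log-series. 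Your observation that each factor has mean one, so the partial products are a bounded non-negative martingale, is shorter and requires no special-function identities at all.

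The trade-off is exactly the one you flag: Doob gives a limit in $[0,\infty)$, whereas convergence of the \emph{log}-series forces the limit to be strictly positive a.s. This is not a cosmetic refinement. Theorem~\ref{theorem_main} needs $\e[X(\alpha,\beta)^{s-1}]<\infty$ for $s<1$, and Theorem~\ref{thm_exp_functional} writes $I_q$ as $C\,X(\tilde\rho,\tilde\zeta)/X(\zeta,\rho)$, so $X(\zeta,\rho)>0$ a.s.\ is indispensable. Your proposed supplement --- Kolmogorov's two-series theorem on the logs with the digamma asymptotics and the telescoping interlacing bound --- is precisely the paper's argument in slightly different packaging (Lemma~\ref{lemmaMain} is the abstract form of your telescoping bound). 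So once positivity is required, the martingale step becomes a pleasant but ultimately redundant preamble: one ends up doing the log-series estimate anyway.
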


Before proving Proposition \ref{prop_infinite_product}, let us establish the following simple (but useful) result. 
\begin{lemma}\label{lemmaMain}
Assume that $\alpha$ and $\beta$ are two unbounded sequences satisfying \eqref{interlacing_property}, and $f: \r^+ \mapsto \r$ is a monotone function such that $\lim_{x\to +\infty} f(x)=0$. Then 
\beq\label{Lemma_main_eq1}
\left | \sum\limits_{n\ge 1} (f(\beta_n)-f(\alpha_n)) \right | < |f(\alpha_1)|. 
\eeq
\end{lemma}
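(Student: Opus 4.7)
The plan is to prove \eqref{Lemma_main_eq1} by a telescoping/rearrangement argument that makes essential use of the interlacing $\alpha_n < \beta_n < \alpha_{n+1}$. Since both sides of \eqref{Lemma_main_eq1} are invariant under the substitution $f \mapsto -f$, I would first reduce without loss of generality to the case where $f$ is non-increasing; combined with $\lim_{x\to+\infty} f(x)=0$, this forces $f \ge 0$ on $\mathbb{R}^+$, so that in particular $|f(\alpha_1)| = f(\alpha_1)$.

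The key manipulation is to regroup the partial sums $S_N := \sum_{n=1}^N\bigl(f(\beta_n) - f(\alpha_n)\bigr)$ so that each $f(\beta_n)$ is paired with $f(\alpha_{n+1})$ rather than with $f(\alpha_n)$, giving the identity
$$S_N \;=\; -f(\alpha_1) \;+\; \sum_{n=1}^{N-1}\bigl(f(\beta_n) - f(\alpha_{n+1})\bigr) \;+\; f(\beta_N).$$
By $\beta_n < \alpha_{n+1}$ and monotonicity every term in the middle sum is non-negative, while by $\alpha_n < \beta_n$ every summand of the original series is non-positive, so $S_N \le 0$ for every $N$. Combining these two observations gives the uniform bound $\sum_{n=1}^{N-1}\bigl(f(\beta_n) - f(\alpha_{n+1})\bigr) \le f(\alpha_1) - f(\beta_N) \le f(\alpha_1)$, which yields convergence of the middle sum (it is monotone and bounded) and, using $f(\beta_N) \to 0$, convergence of $S_N$ itself. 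Passing to the limit,
$$|S_\infty| \;=\; f(\alpha_1) \;-\; \sum_{n \ge 1}\bigl(f(\beta_n) - f(\alpha_{n+1})\bigr) \;\le\; f(\alpha_1).$$

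The main (and really the only) obstacle I anticipate is upgrading this to the \emph{strict} inequality asserted in \eqref{Lemma_main_eq1}. Equality in the previous display would force $f(\beta_n) = f(\alpha_{n+1})$ for every $n \ge 1$, which means the descent of $f$ from $f(\alpha_1)$ to $0$ is accomplished entirely inside the intervals $[\alpha_n,\beta_n]$ and never inside the "gap" intervals $(\beta_n,\alpha_{n+1})$ -- a degenerate situation ruled out in the intended applications, where $f$ will be strictly monotone on every interval. I would therefore finish by recording strict inequality under this mild non-degeneracy assumption on $f$; the bulk of the proof is the elementary telescoping manipulation above.
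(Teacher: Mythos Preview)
Your approach is essentially identical to the paper's: reduce by symmetry to one monotonicity case, then bound the partial sums via telescoping using $\beta_n < \alpha_{n+1}$. Your concern about the strict inequality is well-founded and is in fact shared by the paper's own proof, which writes $f(\alpha_{m+1}) - f(\alpha_1) < -f(\alpha_1)$ without justifying $f(\alpha_{m+1}) \neq 0$; as you note, equality in \eqref{Lemma_main_eq1} can occur when $f$ is constant on every gap $(\beta_n,\alpha_{n+1})$, but this degeneracy never arises in the applications, where $f$ is always strictly monotone.
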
 
\begin{proof}
Assume that $f$ is increasing. Then the condition $f(+\infty)=0$ implies that $f(x)\le 0$ for all $x$, thus for any $m \in {\mathbb N}$
we have
\beqq
0\le S_m := \sum\limits_{n=1}^m (f(\beta_n)-f(\alpha_n)) 
\le
 \sum\limits_{n=1}^m (f(\alpha_{n+1})-f(\alpha_n))=f(\alpha_{m+1})-f(\alpha_1)<
 -f(\alpha_1)=|f(\alpha_1)|. 
\eeqq
The above inequality and the fact that the sequence $S_m$ is increasing show that the series  
in \eqref{Lemma_main_eq1} converges and its sum is bounded by $|f(\alpha_1)|$. The case when $f$ is decreasing can be considered in exactly the same way. 
\end{proof}

\noindent
{\it Proof of Proposition \ref{prop_infinite_product}:}
Considering the logarithm of both sides of \eqref{def_X}, we see that we need to establish the a.s. convergence of the infinite series 
\beq\label{Xc_series}
\log( X(\alpha,\beta))=\sum\limits_{n\ge 1} \ln\left(B_{(\alpha_n,\beta_n-\alpha_n)} \frac{\beta_n}{\alpha_n} \right).
\eeq 
The Mellin transform of a beta random variable is given by
\beq\label{Mellin_beta}
\e\left[ \left(B_{(a,b)}\right)^{s-1} \right]=\frac{\Gamma(a+b)\Gamma(a+s-1)}{\Gamma(a) \Gamma(a+b+s-1)},
\;\;\; \re(s)>1-a. 
\eeq
By differentiating the above identity twice and setting $s=1$, we find
\beqq
\e\left[\ln(B_{(a,b)} )\right]=\psi(a)-\psi(a+b), \;\;\;
{\textnormal{Var}}\left[\ln(B_{(a,b)} )\right]=\psi'(a)-\psi'(a+b),
\eeqq 
where $\psi(z):=\Gamma'(z)/\Gamma(z)$ is the digamma function. It is known that 
$f(z):=\ln(z)-\psi(z)$ is a completely monotone function which decreases to zero 
(see  \cite[Theorem 1]{Alzer} or formula 8.361.8 in \cite{Jeffrey2007}). This implies 
that the function $-f'(z)+1/z\equiv \psi'(z)$, has the same property. Applying Lemma \ref{lemmaMain} we conclude that both series  
\beqq
\sum\limits_{n\ge 1} \e\left[\ln\left(B_{(\alpha_n,\beta_n-\alpha_n)} \frac{\beta_n}{\alpha_n} \right)\right]
&=&\sum\limits_{n\ge 1} (f(\alpha_n)-f(\beta_n)), \\
\sum\limits_{n\ge 1} {\textnormal{Var}}\left[\ln\left(B_{(\alpha_n,\beta_n-\alpha_n)} \frac{\beta_n}{\alpha_n} \right)\right]
&=&\sum\limits_{n\ge 1} ( \psi'(\alpha_n) - \psi'(\beta_n) ) 
\eeqq
converge, therefore Khintchine-Kolmogorov Convergence Theorem implies a.s. convergence of the infinite series \eqref{Xc_series}. 
\qed

\begin{theorem}\label{theorem_main}
The Mellin transform $\mm(s):=\e[(X(\alpha,\beta))^{s-1}]$ exists for all $s>1-\alpha_1$ and it can be analytically continued to a meromorphic function 
\beq\label{inf_product_MXs}
\mm(s)=\prod\limits_{n\ge 1} \frac{\Gamma(\beta_n)\Gamma(\alpha_n+s-1)}{\Gamma(\alpha_n) \Gamma(\beta_n+s-1)}
\left(\frac{\beta_n}{\alpha_n}\right)^{s-1}.
\eeq
 The above infinite product converges uniformly on compact subsets of the complex plane, not containing the poles of 
 $\mm(s)$. 
\end{theorem}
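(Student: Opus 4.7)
The plan is to prove the product formula \eqref{inf_product_MXs} in three steps: (a) show that the infinite product converges uniformly on compact subsets of $\c$ avoiding the poles $\{1-\alpha_n-k : n\ge 1,\, k\ge 0\}$; (b) identify its value with $\mm(s)$ on a real half-line via a uniform-integrability argument; (c) invoke analytic continuation.

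\textbf{Step (a).} Denote the $n$-th factor of \eqref{inf_product_MXs} by $f_n(s)$, and set
$$g_s(z) := \ln\Gamma(z+s-1) - \ln\Gamma(z) - (s-1)\ln z.$$
Then $\ln f_n(s) = g_s(\alpha_n) - g_s(\beta_n) = -\int_{\alpha_n}^{\beta_n} g_s'(z)\,dz$. Standard asymptotics of the digamma function give $g_s'(z)=\psi(z+s-1)-\psi(z)-(s-1)/z = O(|z|^{-2})$ as $|z|\to\infty$, with the implicit constant locally uniform in $s$. The interlacing property \eqref{interlacing_property} forces the intervals $(\alpha_n,\beta_n)$ to be pairwise disjoint subsets of $(\alpha_1,\infty)$, so for any compact $K$ avoiding the poles and any $N_0=N_0(K)$ large enough to keep $g_s'$ nonsingular on $(\alpha_{N_0},\infty)$ for $s\in K$,
$$\sum_{n\ge N_0}|\ln f_n(s)| \;\le\; \int_{\alpha_{N_0}}^{\infty}|g_s'(z)|\,dz \;=\; O(1/\alpha_{N_0}),$$
uniformly for $s\in K$. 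The tail of the product converges uniformly on $K$, while the finitely many initial factors are meromorphic with the claimed poles; together they define a meromorphic function $P(s)$ on $\c$.

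\textbf{Step (b).} Let $X_N := \prod_{n=1}^N B_{(\alpha_n,\beta_n-\alpha_n)}\beta_n/\alpha_n$, so $X_N\to X(\alpha,\beta)$ almost surely by Proposition \ref{prop_infinite_product}. Independence together with \eqref{Mellin_beta} yields $\e[X_N^{s-1}]=\prod_{n=1}^N f_n(s)$ for real $s>1-\alpha_1$, and this converges to $P(s)$ by Step (a). Applying Step (a) at a value $s+\varepsilon$ still inside $(1-\alpha_1,\infty)$, with $\varepsilon$ of the appropriate sign depending on whether $s>1$ or $s<1$, gives $\sup_N \e[X_N^{(s-1)(1+\delta)}]<\infty$ for some $\delta>0$; hence $\{X_N^{s-1}\}$ is $L^{1+\delta}$-bounded and therefore uniformly integrable. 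Combined with almost-sure convergence, this yields $\mm(s)=\e[X^{s-1}]=\lim_N \e[X_N^{s-1}] = P(s)$ for all real $s>1-\alpha_1$.

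\textbf{Step (c).} The Mellin transform $\mm(s)$ is holomorphic on $\{\re(s)>1-\alpha_1\}$ and coincides with $P(s)$ on the real subinterval $(1-\alpha_1,\infty)$ by Step (b). Since $P$ is meromorphic on all of $\c$ with the required pole structure (Step (a)), it is the analytic continuation of $\mm$, establishing \eqref{inf_product_MXs} in the meromorphic sense.

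The main technical obstacle is the estimate in Step (a), which must be locally uniform in $s$ without any growth hypothesis on the sequences $\alpha$ and $\beta$ beyond unboundedness. A naive per-term bound such as $|g_s(\alpha_n)-g_s(\beta_n)|=O((\beta_n-\alpha_n)/\alpha_n^2)$ is useless when $\beta_n-\alpha_n$ may grow; the key observation is that interlacing makes the intervals $(\alpha_n,\beta_n)$ disjoint in $(\alpha_1,\infty)$, which collapses the problematic sum into the single tail integral of $|g_s'|$, finite because of the $O(z^{-2})$ decay alone.
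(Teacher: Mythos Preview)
Your proof is correct and the core mechanism---exploiting the interlacing to control the tail of the product---is the same as the paper's, but the execution differs in two places that are worth noting.

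For the uniform convergence, the paper introduces the Laplace representation
\[
f_a(z)=\ln\frac{\Gamma(a+z)}{\Gamma(z)z^a}=\int_0^\infty g_a(t)e^{-zt}\,\d t
\]
and applies Lemma~\ref{lemmaMain} to the monotone functions $t\mapsto e^{-\alpha_n t}$ to obtain a Cauchy estimate; uniform boundedness is then proved separately via $|\mm_k(s)|\le \mm_k(\re(s))$ and another appeal to Lemma~\ref{lemmaMain}, after which Vitali--Porter upgrades pointwise to uniform convergence. Your Step~(a) bypasses all of this: you differentiate in $z$ rather than pass to the Laplace side, and the disjointness of the intervals $(\alpha_n,\beta_n)$ collapses $\sum_n |\ln f_n(s)|$ into a single tail integral of $|g_s'|$, whose $O(z^{-2})$ decay (from the standard digamma asymptotics) gives uniform convergence directly. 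This is the same interlacing trick as Lemma~\ref{lemmaMain}, but applied more economically; in particular you avoid the external complete-monotonicity results the paper imports for $f_a$.

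For identifying the limit with $\mm(s)$, the paper works on the vertical line $\re(s)=1$ and invokes L\'evy's continuity theorem, then continues analytically. You instead work on the real half-line $s>1-\alpha_1$, bound $\sup_N \e[X_N^{(s-1)(1+\delta)}]$ by the already-established convergence at a nearby real exponent, and pass to the limit by uniform integrability. Both arguments are standard; yours has the mild advantage that it simultaneously shows $\mm(s)<\infty$ for every real $s>1-\alpha_1$, which is what you need to assert holomorphy of $\mm$ on the half-plane in Step~(c).
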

\begin{proof}
Proposition \ref{prop_infinite_product} combined with \eqref{Mellin_beta} and L\'evy's Continuity Theorem imply that 
\eqref{inf_product_MXs} is true for all $s\in \c$ on the vertical line $\re(s)=1$, and that the infinite product in the right-hand side of 
 \eqref{inf_product_MXs} converges uniformly on compact subsets of this line. Our first goal is to prove uniform convergence of this product on any compact subset of $\c$, excluding the poles of $\mm(s)$.

For $t>0$ and $a,z\in \c$ satysfying $\re(z)>\max(0,-\re(a))$, let us define
\beq\label{def_faz}  
  f_a(z):=\ln \left[\frac{\Gamma(a+z)}{\Gamma(z)z^a}\right], \;\;\; 
  g_a(t):=\left(a-\frac{1-e^{-at}}{1-e^{-t}}\right) \frac{1}{t}. 
\eeq 
It is known that for $a\in \r$, the function  $z \in (\max(0,-a), \infty) \mapsto |f_a(z)|$ is completely monotone. This follows 
from \cite[Theorem 4]{QiNiuChen} if $a>0$ and from  \cite[Theorem 1]{Qi2008281} if $a<0$. We also have the following integral representation,
\beq\label{faz_Laplace_transform}
f_a(z)=\int\limits_0^{\infty} g_a(t) e^{-z t} \d t, \;\;\; a,z\in \c, \; \re(z)>\max(0,-\re(a)). 
\eeq
which can be established using formula 8.361.5 in \cite{Jeffrey2007}.

Let $A$ be a compact subset of $\c$. Define 
\beqq
v^-=\min\{\re(s) : \; s\in A\}, \;\;\;
v^+=\max\{ \re(s) : \; s \in A\}.
\eeqq
The convergence of the infinite product \eqref{inf_product_MXs} is not affected by any finite number of terms.
Therefore, without the loss of generality we can assume that $1-\alpha_1<v^-$. If this was not true, 
we would take $N$ large enough, so that $1-\alpha_{N}<v^-$, and 
investigate the convergence of the tail $\prod_{n\ge N}(\dots)$ of the infinite product in \eqref{inf_product_MXs} .

For $k\ge 1$ we define 
\beqq
\mm_{k}(s):=\prod\limits_{n=1}^k \frac{\Gamma(\beta_n)\Gamma(\alpha_n+s-1)}{\Gamma(\alpha_n) \Gamma(\beta_n+s-1)}
\left(\frac{\beta_n}{\alpha_n}\right)^{s-1}.
\eeqq
The function $\mm_{k}(s)$ is analytic and zero-free in the half-plane $\re(s)>1-\alpha_{1}$,
and due to our assumption $1-\alpha_1<v^-$, this half-plane includes the set $A$. Using \eqref{faz_Laplace_transform} and Lemma \ref{lemmaMain} we find that for all 
$l>k\ge 1$ and $s\in A$, 
\beqq
\left| \ln(\mm_{l}(s))-\ln(\mm_k(s)) \right|&=& \left | \sum\limits_{n=k+1}^{l} (f_{s-1}(\alpha_n)-f_{s-1}(\beta_n)) \right |=
\left | \int\limits_0^{\infty} g_{s-1}(t) \sum\limits_{n=k+1}^{l} \left( e^{-\alpha_n t} -e^{-\beta_n t} \right) \d t \right | \\
&\le& \int\limits_0^{\infty}  |g_{s-1}(t)| \sum\limits_{n=k+1}^{\infty} \left( e^{-\alpha_n t} -e^{-\beta_n t} \right) \d t
< \int\limits_0^{\infty}  |g_{s-1}(t)| e^{-\alpha_{k+1} t} \d t. 
\eeqq 
For any fixed $s\in A$, the right-hand side in the above inequality can be made arbitrarily small if $k$ is sufficiently large, since $\alpha_{k}  \to +\infty$ as $k\to +\infty$. This shows that 
$\mm_k(s)$ forms a Cauchy sequence and, therefore, the infinite product in 
\eqref{inf_product_MXs} converges pointwise. 

Now we need to establish the uniform convergence on the compact set $A \subset \c$. Using \eqref{Mellin_beta},
we check that for all $s$ in the half-plane $\re(s)>1-\alpha_1$ we have $\mm_k(s)\equiv\e[X_k^{s-1}]$, where $X_k$ is defined by
\beqq
X_k:=\prod\limits_{n=1}^k B_{(\alpha_n,\beta_n-\alpha_n)} \frac{\beta_n}{\alpha_n}. 
\eeqq
Denoting $v=\re(s)$ and using Lemma \eqref{lemmaMain} and monotonicity of 
the function $z \in (\max(0,-a),\infty)  \mapsto f_a(z)$ for $a\in \r$ we obtain
\beq\label{uniform_bound}
|\mm_k(s)|=|\e[X_k^{s-1}]| &\le& \e[ |X_k^{s-1}|]=\e[ X_k^{v-1}]=\mm_k(v)\\ \nonumber
&=&\exp\left( \sum\limits_{n=1}^k ( f_{v-1}(\alpha_n)-f_{v-1}(\beta_n)) \right)
< \exp(|f_{v-1}(\alpha_{1})|). 
\eeq
The function $v\mapsto f_{v-1}(\alpha_{1})$ is continuous on the interval $v\in [v^-, v^+]$, therefore it is bounded on this interval. 
This fact combined with the inequality \eqref{uniform_bound} implies uniform boundedness of all functions $\mm_k(s)$, $k\ge 1$ 
in the vertical strip $v^- \le \re(s) \le v^+$, therefore also on the set $A$. 
By the Vitaly-Porter Theorem, uniform boundedness and pointwise convergence of $\mm_k(s)$ imply uniform convergence of these functions. This proves that the product \eqref{inf_product_MXs} converges uniformly to a function which is analytic in $\re(s)>1-\alpha_1$. By analytic continuation we conclude that the Mellin transform of $X(\alpha,\beta)$ exists everywhere in this half-plane.      
\end{proof}

In the next proposition we show that the Mellin transform of $X(\alpha,\beta)$ satisfies two important identities, which will be crucial for our results on exponential functionals of meromorphic processes in section \ref{section_exp_func_meromorphic}. 

\begin{proposition}\label{propeties_of_Ms}
 Assume that $\alpha_n$ and $\beta_n$ satisfy the interlacing property \eqref{interlacing_property}. 
\mbox{}
\begin{itemize}
\item[(i)] For $n\ge 1$ define $\tilde \alpha_n=\beta_n$ and $\tilde \beta_n=\alpha_{n+1}$. The sequences $\tilde \alpha_n$ and $\tilde \beta_n$ also satisfy the interlacing property  \eqref{interlacing_property}. Define $\tilde \mm(s)=\e\left[X(\tilde \alpha,\tilde\beta)^{s-1}\right]$. We have the following identity  
\beq\label{idenity_one_over_M}
\mm(s) \times \tilde \mm(s) =\frac{\Gamma(\alpha_1+s-1)}{\Gamma(\alpha_1)\alpha_1^{s-1}}, \;\;\;
\re(s)>1-\alpha_1. 
\eeq
\item[(ii)] The function $\mm(s)$ satisfies $\mm(s+1)=\phi(s)\mm(s)$ for all $s\in \c$, where $\phi(s)$ is a meromorphic function defined as 
\beq\label{def_phi}
\phi(s):=\prod\limits_{n\ge 1}\frac{1+\frac{s-1}{\alpha_n}}{1+\frac{s-1}{\beta_n}}, \;\;\; s\in \c. 
\eeq
\end{itemize}
\end{proposition}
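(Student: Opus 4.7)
The plan for part (i) is to verify the interlacing property for the tilded sequences and then exploit telescoping in the infinite products. Interlacing is immediate from the definitions: $\tilde\alpha_n = \beta_n < \alpha_{n+1} = \tilde\beta_n < \beta_{n+1} = \tilde\alpha_{n+1}$ by \eqref{interlacing_property}. For the identity, I would form the partial products $\mm_N(s)$ and $\tilde\mm_N(s)$ (truncated at $N$) and multiply them together. The factors $\Gamma(\beta_n)$ and $\Gamma(\beta_n + s - 1)$ appearing in $\mm_N$ and $\tilde\mm_N$ cancel pairwise within the same index $n$, while the $\Gamma(\alpha_n)$, $\Gamma(\alpha_n + s - 1)$ factors telescope across indices (shifted by one, since $\tilde\mm$ involves $\alpha_{n+1}$). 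The $(\beta_n/\alpha_n)^{s-1}(\alpha_{n+1}/\beta_n)^{s-1} = (\alpha_{n+1}/\alpha_n)^{s-1}$ factor also telescopes. After this bookkeeping, I expect
\[
\mm_N(s)\tilde\mm_N(s) = \frac{\Gamma(\alpha_1+s-1)}{\Gamma(\alpha_1)\,\alpha_1^{s-1}} \cdot \frac{\Gamma(\alpha_{N+1})\,\alpha_{N+1}^{s-1}}{\Gamma(\alpha_{N+1}+s-1)}.
\]
Passing to the limit $N\to\infty$, the second factor tends to $1$ by the classical asymptotic $\Gamma(z+a)/\Gamma(z)\sim z^{a}$ as $z\to\infty$, applied with $a=s-1$ and $z=\alpha_{N+1}\to\infty$. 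Since Theorem \ref{theorem_main} ensures $\mm_N(s)\to \mm(s)$ and $\tilde\mm_N(s)\to\tilde\mm(s)$ for $\re(s)>1-\alpha_1$, the identity \eqref{idenity_one_over_M} follows on this half-plane; both sides being meromorphic extends it everywhere.

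For part (ii), the plan is to compare $\mm(s+1)$ and $\mm(s)$ termwise inside the product. For $\re(s)>1-\alpha_1$ both $\mm(s)$ and $\mm(s+1)$ are given by the convergent infinite product of Theorem \ref{theorem_main}, so I may write $\mm(s+1)/\mm(s) = \lim_N \mm_N(s+1)/\mm_N(s)$. Within each finite partial ratio the $\Gamma$-factors simplify via $\Gamma(z+1)=z\Gamma(z)$:
\[
\frac{\Gamma(\alpha_n + s)/\Gamma(\alpha_n+s-1)}{\Gamma(\beta_n + s)/\Gamma(\beta_n+s-1)} = \frac{\alpha_n + s - 1}{\beta_n + s - 1},
\]
and the power factor contributes an extra $\beta_n/\alpha_n$, giving
\[
\frac{\mm_N(s+1)}{\mm_N(s)} = \prod_{n=1}^N \frac{\alpha_n + s - 1}{\beta_n + s - 1}\cdot\frac{\beta_n}{\alpha_n} = \prod_{n=1}^N \frac{1+\frac{s-1}{\alpha_n}}{1+\frac{s-1}{\beta_n}}.
\]

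To finish I need to show this last product converges to $\phi(s)$ as $N\to\infty$, and then extend the identity by analytic continuation. Convergence of the logarithm reduces (via Taylor expansion $\log(1+\varepsilon)=\varepsilon + O(\varepsilon^2)$ for small $\varepsilon$) to the convergence of $\sum_n\bigl(1/\alpha_n - 1/\beta_n\bigr)$, which I obtain from Lemma \ref{lemmaMain} applied to the monotone function $f(z)=-1/z$ (which increases to $0$); the tail $O(1/\alpha_n^2)$ is absolutely summable since $\alpha_n$ is strictly increasing and unbounded. Thus $\phi(s)$ is a well-defined meromorphic function and $\mm(s+1)=\phi(s)\mm(s)$ on $\re(s)>1-\alpha_1$. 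Meromorphicity of both sides then gives the identity for all $s\in\c$.

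The routine calculations are the cancellations in the finite products; the only subtle point is the asymptotic $\Gamma(\alpha_{N+1})\alpha_{N+1}^{s-1}/\Gamma(\alpha_{N+1}+s-1)\to 1$, which needs $\re(s)$ in a compact set to be uniform but is pointwise enough here since \eqref{idenity_one_over_M} is claimed for individual $s$. So I expect no serious obstacle beyond careful bookkeeping of telescoping indices.
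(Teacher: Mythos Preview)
Your strategy matches the paper's: both parts are read off from the product formula \eqref{inf_product_MXs} together with $\Gamma(z+1)=z\Gamma(z)$, and the paper's own proof is just a one-line pointer to these ingredients plus Lemma \ref{lemmaMain}. Your telescoping computation for (i) and the use of $\Gamma(\alpha_{N+1})\alpha_{N+1}^{s-1}/\Gamma(\alpha_{N+1}+s-1)\to 1$ are exactly what ``follows at once'' means once unpacked.

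There is, however, one genuine gap in your argument for (ii), in the justification that the product \eqref{def_phi} converges. You Taylor-expand $\log(1+(s-1)/\alpha_n)$ and $\log(1+(s-1)/\beta_n)$ separately and then assert that the remainder $\sum O(1/\alpha_n^2)$ is summable ``since $\alpha_n$ is strictly increasing and unbounded.'' That implication is false: take for example $\alpha_n=\sqrt{\log(n+1)}$ (with any $\beta_n\in(\alpha_n,\alpha_{n+1})$), which is strictly increasing and unbounded, yet $\sum 1/\alpha_n^2=\sum 1/\log(n+1)$ diverges. The interlacing hypothesis \eqref{interlacing_property} imposes no growth rate on $\alpha_n$, so $\sum 1/\alpha_n^2<\infty$ is simply not available to you. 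The easy repair is to avoid splitting the two logarithms: write the $n$-th factor of $\phi(s)$ as $1+c_n$ with
\[
c_n=\frac{(s-1)\bigl(\tfrac{1}{\alpha_n}-\tfrac{1}{\beta_n}\bigr)}{1+(s-1)/\beta_n},
\]
so that $|c_n|\le C\,(1/\alpha_n-1/\beta_n)$ for all large $n$. Lemma \ref{lemmaMain} applied to $f(z)=1/z$ gives $\sum(1/\alpha_n-1/\beta_n)<\infty$, hence $\sum|c_n|<\infty$ and the product converges absolutely for every $s\in\c$ away from the poles. This is what the paper's terse ``Lemma \ref{lemmaMain} implies convergence of the infinite product in \eqref{def_phi}'' is pointing to.
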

\begin{proof}
The proof of (i) follows at once from \eqref{inf_product_MXs}. 
Lemma \ref{lemmaMain} implies convergence of the infinite product in \eqref{def_phi}, and  the proof of (ii) follows easily from \eqref{inf_product_MXs} and identity $\Gamma(s+1)=s\Gamma(s)$.   
\end{proof}


\section{Exponential functional of meromorphic processes}\label{section_exp_func_meromorphic}


Let us review some important properties of meromorphic processes, which we will need for our further results (see \cite{KuzKyPa2011} for more details). The density of the L\'evy measure of a meromorphic process is defined  by \eqref{def_pi}. It is clear that the L\'evy measure has exponentially decreasing tails. The only restriction on the coefficients determining the L\'evy measure is the convergence of the 
series 
\beqq
\sum\limits_{n\ge 1} \left(\frac{a_n}{\rho_n^2}+\frac{\hat a_n}{\hat \rho_n^2} \right)<+\infty,
\eeqq
which is easily seen to be equivalent to the convergence of the integral $\int_{\r} x^2 \pi(x) \d x$. 

From the Levy-Khintchine formula we find that 
the Laplace exponent $\psi(z):=\ln \e [\exp(z X_1)]$ of the meromorphic process $X$ is given by 
 \beq\label{eq_psi}
 \psi(z)=\frac 12 \sigma^2 z^2 + \mu z
 +z^2 \sum\limits_{n\ge 1}\frac{a_n }{\rho_n(\rho_n- z)}+z^2 \sum\limits_{n\ge 1} \frac{\hat a_n}{\hat \rho_n (\hat \rho_n+z)}
, \;\;\; -\hat \rho_1< \re(z) <\rho_1,
 \eeq
 where $\sigma\ge 0$ and $\mu \in \r$. 
The function $\psi(z)$ can be analytically continued to a real meromorphic function, having only simple poles at points $\rho_n$, $-\hat \rho_n$. The most important analytical property of meromorphic processes 
(also satisfied by hyper-exponential processes) is that for any $q>0$ the equation $\psi(z)=q$ has only simple real solutions
$\zeta_n$, $-\hat \zeta_n$, which satisfy the interlacing property  
 \beq\label{interlacing_prop}
 ... -\hat\rho_2 <-\hat\zeta_2<-\hat\rho_1<- \hat\zeta_1 <0 < \zeta_1 <  \rho_1 < \zeta_2 < \rho_2 < ...
 \eeq
 When we need to emphasize the dependence of these solutions on the parameter $q$, we will denote them
 $\zeta_n(q)$ and $\hat \zeta_n(q)$. 
 Moreover, we have the following infinite product representation
  \beq\label{WH_fact_1}
 q-\psi(z)=q\prod\limits_{n\ge 1} \frac{1-\frac{z}{\zeta_n}}{1-\frac{z}{\rho_n}} \times
\prod\limits_{n\ge 1} \frac{1+\frac{z}{\hat \zeta_n}}{1+\frac{z}{\hat \rho_n}}, \;\;\; z\in \c,
 \eeq
 where the two infinite products converge due to Lemma \ref{lemmaMain}. Formula \eqref{WH_fact_1} is the key 
 result in the theory of meromorphic processes, since it allows to identify the Wiener-Hopf factors for the process $X$ (see \cite[Theorem 2]{KuzKyPa2011}). 
 
 The next theorem is the main theoretical result in this paper. We identify the distribution of the exponential functional $I_q$ for a general meromorphic process. This result should be compared with \cite[Theorem 4.3]{CaiKou2010}, where the distibution of the exponential functional is identified for a hyper-exponential process having a non-zero Gaussian component.

\begin{theorem}\label{thm_exp_functional} Assume that $q>0$.  Define $\hat\rho_0=0$ and the four sequences 
\beqq
\zeta=\{\zeta_n\}_{n\ge 1}, \; \rho=\{\rho_n\}_{n\ge 1}, \; 
\tilde  \zeta=\{1+\hat\zeta_n\}_{n\ge 1}, \; \tilde \rho=\{1+\hat \rho_{n-1} \}_{n\ge 1}.
\eeqq
Then we have the following identity in distribution
 \beq\label{Iq_distribution_identity}
 I_q \stackrel{d}{=} C  \times  \frac{X(\tilde \rho, \tilde \zeta)}{X(\zeta,\rho)},
 \eeq
 where 
 \beq\label{def_C}
C=C(q):=q^{-1} \prod\limits_{n\ge 1} \frac{1+\frac{1}{\hat \rho_n}}{1+\frac{1}{\hat \zeta_n}},
\eeq
and the random variables  $X(\tilde \rho, \tilde \zeta)$ and $X(\zeta,\rho)$ are independent
and are defined by \eqref{def_X}. 
The Mellin transform $\mm(s)=\mm(s,q):=\e[I_q^{s-1}]$ is finite for $0<\re(s)<1+\zeta_1$ and is given by
 \beq\label{def_f}
 \mm(s)=
C^{s-1} 
 \prod\limits_{n\ge 1} 
 \frac{ \Gamma(\hat \zeta_{n}+1) \Gamma(\hat \rho_{n-1}+s)}
{ \Gamma(\hat \rho_{n-1}+1)\Gamma(\hat \zeta_{n}+s)}
\left(\frac{\hat \zeta_{n}+1}{ \hat \rho_{n-1}+1} \right)^{s-1}
\frac{\Gamma(\rho_n)\Gamma(\zeta_n+1-s)  }
{\Gamma(\zeta_n) \Gamma(\rho_n+1-s) }
\left(\frac{\zeta_n }{\rho_n } \right)^{s-1}.
\eeq
\end{theorem}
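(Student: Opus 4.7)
My plan is to use the classical Mellin-transform / functional-equation characterization of $I_q$. Concretely, I will show that the right-hand side of \eqref{def_f} satisfies the Carmona-Petit-Yor recursion
$$(q - \psi(s))\, \mm(s+1,q) = s\, \mm(s,q),$$
together with the normalization $\mm(1,q)=1$, and then invoke a uniqueness result to conclude that the explicit product in \eqref{def_f} is indeed the Mellin transform $\e[I_q^{s-1}]$ on the strip $0<\re(s)<1+\zeta_1$. The distributional identity \eqref{Iq_distribution_identity} will then follow by factoring \eqref{def_f} and applying Theorem \ref{theorem_main}.

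The functional equation itself I would either derive by a standard Carmona-Petit-Yor argument -- writing $\mm(s,q) = q\int_0^\infty e^{-qt}\e[Z_t^{s-1}]\,\d t$ and integrating by parts, using the fact that meromorphic processes have exponential moments on both sides of the strip $-\hat\rho_1 < \re(z) < \rho_1$ -- or import it from the literature. To verify that the candidate \eqref{def_f} satisfies the recursion, I compute $\mm(s+1)/\mm(s)$ directly, using $\Gamma(z+1)=z\Gamma(z)$; after telescoping this ratio becomes
$$\frac{\mm(s+1)}{\mm(s)} = C \prod_{n\ge 1}\frac{(\hat\rho_{n-1}+s)(\hat\zeta_n+1)(\rho_n - s)\zeta_n}{(\hat\zeta_n+s)(\hat\rho_{n-1}+1)(\zeta_n - s)\rho_n}.$$
On the other hand, the Wiener-Hopf factorization \eqref{WH_fact_1} rewrites $s/(q-\psi(s))$ as an explicit infinite product in the same quantities $\zeta_n,\rho_n,\hat\zeta_n,\hat\rho_n$. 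Matching the two sides is then a bookkeeping exercise: the factor of $s$ arises from the first product above thanks to $\hat\rho_0 = 0$, and the constant $C$ in \eqref{def_C} is exactly what is needed to absorb the left-over constants $\prod_n(1+1/\hat\rho_n)/(1+1/\hat\zeta_n)$. Convergence of every infinite product in this step is guaranteed by Lemma \ref{lemmaMain} applied to suitable monotone functions. Setting $s=1$ in \eqref{def_f} collapses every gamma ratio to $1$ independently of $C$, so the normalization $\mm(1,q)=1$ is automatic.

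The main obstacle will be uniqueness, since the functional equation alone admits a $1$-periodic ambiguity. I see two viable routes. The analytic route is to invoke (or re-prove in this setting) a Carmona-Petit-Yor type uniqueness theorem for the Mellin recursion, requiring sub-exponential growth of the solution along vertical lines in the strip; for the candidate \eqref{def_f} this estimate can be read off from Stirling's formula applied to the ratios $\Gamma(\alpha_n+s)/\Gamma(\beta_n+s)$, controlled uniformly in $n$ via the interlacing \eqref{interlacing_prop}. The more robust route is approximation: truncate both series in \eqref{def_pi} to the first $N$ terms, obtaining a hyper-exponential process for which the Mellin transform of $I_q$ has already been identified in \cite[Theorem 4.3]{CaiKou2010} via exactly the same type of product formula; then pass to the limit $N\to\infty$ using weak convergence of the truncated processes to $X$, stability of the Wiener-Hopf roots $\zeta_n,\hat\zeta_n$ under truncation, and joint continuity of the product expression in these parameters.

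Once \eqref{def_f} is identified with $\e[I_q^{s-1}]$, the identity \eqref{Iq_distribution_identity} is immediate. By Theorem \ref{theorem_main}, the first line of the infinite product in \eqref{def_f} is $\e[X(\tilde\rho,\tilde\zeta)^{s-1}]$, and the second line, read at the argument $2-s$, is $\e[X(\zeta,\rho)^{1-s}]$. Hence the right-hand side of \eqref{def_f} is the Mellin transform of $C \cdot X(\tilde\rho,\tilde\zeta) / X(\zeta,\rho)$ with the two beta factors independent, which is exactly \eqref{Iq_distribution_identity}.
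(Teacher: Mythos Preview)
Your strategy --- verify the recursion $f(s+1)=sf(s)/(q-\psi(s))$ for the candidate $f$ in \eqref{def_f}, check $f(1)=1$, then invoke a uniqueness criterion --- is exactly what the paper does, citing \cite[Proposition~2]{KuPa2010_prep}. Your verification of the recursion via $\Gamma(z+1)=z\Gamma(z)$ and the factorization \eqref{WH_fact_1} is also the paper's argument (packaged there as Proposition~\ref{propeties_of_Ms}(ii)), and the split of $f(s)$ into $C^{s-1}M_1(s)M_2(2-s)$ for the distributional identity matches as well.

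Where you underestimate the difficulty is the growth condition needed for uniqueness. The verification result the paper invokes requires $|f(s)|^{-1}=o(\exp(2\pi|\im(s)|))$ uniformly in the strip --- a \emph{lower} bound on $|f(s)|$, not an upper bound. An upper bound is free (since $f$ is itself the Mellin transform of a positive random variable, $|f(s)|\le f(\re(s))$), but a uniform lower bound on an infinite product of gamma ratios cannot simply be ``read off from Stirling'' term by term; the interlacing controls the product from above, not from below. The paper's key device for this step is the reciprocal identity of Proposition~\ref{propeties_of_Ms}(i), which gives
\[
f(s)^{-1}=\frac{C^{1-s}\,\Gamma(\zeta_1)\,\zeta_1^{1-s}}{\Gamma(s)\,\Gamma(\zeta_1+1-s)}\,\tilde M_1(s)\,\tilde M_2(2-s),
\]
where $\tilde M_1,\tilde M_2$ are again Mellin transforms of infinite beta products and hence uniformly bounded on the relevant strip. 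This isolates all the vertical growth in the single explicit factor $1/(\Gamma(s)\Gamma(\zeta_1+1-s))$, to which Stirling applies directly. That reciprocal trick is the one substantive idea your analytic route is missing. Your alternative approximation route via hyper-exponential truncation is a genuine alternative the paper does not pursue for the proof itself, but making it rigorous (convergence of the roots $\zeta_n^{(N)},\hat\zeta_n^{(N)}$ and uniform control of the gamma product in the limit $N\to\infty$) would be comparable effort.
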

 \begin{proof}
First, we note that the two pairs of sequences ($\zeta, \rho$) and ($\tilde \rho,\tilde \zeta$) satisfy the interlacing property
\eqref{interlacing_property}, therefore the random variables $X(\zeta,\rho)$ and $X(\tilde \rho, \tilde \zeta)$ are well-defined. The 
infinite product defining constanct $C$ converges due to Lemma \ref{lemmaMain}. 

Let $f(s)$ denote the function in the right-hand side of \eqref{def_f}.
Define $M_1(s):=\e[(X(\tilde \rho,\tilde \zeta))^{s-1}]$ and $M_2(s):=\e[(X(\zeta,\rho))^{s-1}]$. Using Theorem \ref{theorem_main} we find that $f(s)=C^{s-1} M_1(s) M_2(2-s)$ is the Mellin transform of the random variable
in the right-hand side of \eqref{Iq_distribution_identity}, 
 and that $f(s)$ is analytic in the strip $0<\re(s)<1+\zeta_1$. Formula \eqref{def_f} 
 implies that $f(s)$ is zero-free in the wider strip $-\hat \zeta_1<\re(s)<1+\rho_1$.

 We will prove that $\mm(s)\equiv f(s)$, which would imply the identity in distribution  
\eqref{Iq_distribution_identity}. We will use the verification result, see \cite[Proposition 2]{KuPa2010_prep}, which states that 
$\mm(s)\equiv f(s)$, provided that the following three conditions are satisfied: 
\begin{itemize} 
 \item[(i)] for some $\theta>0$, the function $f(s)$ is analytic and zero free in the vertical strip $0<\re(s)<1+\theta$;
 \item[(ii)] the function $f(s)$ satisfies
 \beqq
 f(s+1)=\frac{s}{q-\psi(s)} f(s), \;\;\; 0<s<\theta,
 \eeqq
 where $\psi(s)$ is the Laplace exponent of the process $X$;
 \item[(iii)] $|f(s)|^{-1}=o(\exp(2\pi |\im(s)|))$ as $\im(s)\to \infty$, uniformly in the strip $0<\re(s)<1+\theta$. 
 \end{itemize}

According to the above discussion of the analytic properties of $f(s)$, condition (i) is satisfied with $\theta=\zeta_1$. Let us verify condition (ii). Proposition \ref{propeties_of_Ms}(ii) gives us two identities
\beqq
M_1(s+1)=M_1(s) \times \prod\limits_{n\ge 1} \frac{1+\frac{s-1}{1+\hat \rho_{n-1}}}{1+\frac{s-1}{1+\hat \zeta_n}}, \;\;\;
M_2(1-s)=M_2(2-s) \times \prod\limits_{n\ge 1} \frac{1-\frac{s}{\rho_{n}}}{1-\frac{s}{\zeta_n}}, \;\;\;
s\in \c.  
\eeqq
Combining \eqref{def_f} with the above identities we obtain
\beqq
f(s+1)=C^s M_1(s+1) M_2(1-s)=f(s) \times q^{-1} \prod\limits_{n\ge 1} \frac{1+\frac{1}{\hat \rho_n}}{1+\frac{1}{\hat \zeta_n}}
\times \frac{1+\frac{s-1}{1+\hat \rho_{n-1}}}{1+\frac{s-1}{1+\hat \zeta_n}} 
\times
\frac{1-\frac{s}{\rho_{n}}}{1-\frac{s}{\zeta_n}}. 
\eeqq
where we have also used \eqref{def_C}. Rearranging the terms in the above infinite product and using 
\eqref{WH_fact_1} we conclude that 
\beqq
f(s+1)=f(s) \times s q^{-1} \prod\limits_{n\ge 1} 
\frac{1+\frac{s}{\hat \rho_{n}}}{1+\frac{s}{\hat \zeta_n}} 
\times
\frac{1-\frac{s}{\rho_{n}}}{1-\frac{s}{\zeta_n}}=\frac{s}{q-\psi(s)} f(s),
\eeqq
thus condition (ii) is also satisfied.

Finally, let us show that condition (iii) holds true. We use Proposition \ref{propeties_of_Ms}(i) and find that
\beq\label{eq_f_M1_M2}
f(s)^{-1}=C^{1-s} \frac{\Gamma(\zeta_1)\zeta_1^{1-s}}{\Gamma(s)\Gamma(\zeta_1+1-s)}
\times \tilde M_1(s) \times \tilde M_2(2-s), \;\;\; 0< \re(s)< 1+\zeta_1,
\eeq
where $\tilde M_1(s):=\e[(X(\alpha,\beta))^{s-1}]$ and $\tilde M_2(s):=\e[(X(\tilde \alpha, \tilde \beta))^{s-1}]$ and the sequences $\alpha$, $\beta$, $\tilde \alpha$, $\tilde \beta$ are defined as
follows
\beqq
\alpha_n:=1+\hat\zeta_n, \; \beta_n:=1+\hat \rho_n, \; \tilde \alpha_n:=\rho_n, \; \tilde \beta_n:=\zeta_{n+1}, \;\;\; n\ge 1. 
\eeqq 
According to Theorem \ref{theorem_main}, the Mellin transform $\tilde M_1(s)$ (resp. $\tilde M_2(s)$) is finite 
in the half-plane $\re(s)>-\hat \zeta_1$ (resp. $\re(s)>1-\rho_1$). 
Since $|\tilde M_i(s)|<\tilde M_i(\re(s))$, 
we see that for any $\epsilon>0$, the function 
$\tilde M_1(s)$ (resp. $\tilde M_2(2-s)$) is uniformly bounded in the half-plane $\re(s)\ge \epsilon-\hat \zeta_1$ (resp. $\re(s)\le 1+\rho_1-\epsilon$). Taking 
$\epsilon=\tfrac{1}{2} \min(\hat \zeta_1, \rho_1-\zeta_1)$ we conclude that 
the function $\tilde M_1(s) \times \tilde M_2(2-s)$ is uniformly bounded in the vertical strip $0\le \re(s) \le 1+\zeta_1$. 

To estimate the gamma functions in \eqref{eq_f_M1_M2}, we use formula 8.328.1 in \cite{Jeffrey2007} 
\beqq
\lim\limits_{y\to \infty} |\Gamma(x+\i y)| e^{\frac{\pi}{2}|y|} |y|^{\frac{1}{2}-x} =\sqrt{2\pi}, \;\;\; x, y \in \r. 
\eeqq
It is known that the limit exists uniformly in $x$ on compact subsets of $\r$ (as can be easily seen from Stirling's asymptotic formula for the gamma function). 
The above formula shows that for any $\epsilon>0$ 
\beqq
\frac{1}{|\Gamma(s)\Gamma(\zeta_1+1-s)|}= o(\exp((\pi+\epsilon)|\im(s)|))
\eeqq
as $\im(s) \to \infty$, uniformly in the strip $0\le \re(s) \le 1+\zeta_1$. This fact combined with 
\eqref{eq_f_M1_M2} and uniform boundedness of $\tilde M_1(s)\times \tilde M_2(2-s)$ shows that condition (iii) is also satisfied. Therefore, according to \cite[Proposition 2]{KuPa2010_prep} we have $\mm(s)\equiv f(s)$, 
and this ends the proof of Theorem \ref{thm_exp_functional}. 
 \end{proof}

The results of Theorem  \ref{thm_exp_functional} may be useful for studying 
 self-similar Markov processes. It is well-known that 
 Lamperti transformation \cite{LA} maps a positive self-similar Markov
process $Y$ to a L\'evy process $X$, and the exponential functional of $X$ plays a very important role in describing various properties of the original process $Y$. The last several years have witnessed a large volume of research 
on self-similar Markov process, which are constructed from stable L\'evy processes by conditioning 
or various path transformations (see  \cite{CC,CPP,KuPa2010_prep,KyPaWa}).
We note that in all of these examples (at least in dimension one), the Lamperti transformed process belongs to the family of meromorphic processes. 
Therefore, we hope that our results on exponential functionals of a general meromorphic process may be useful in the future for studying other interesting self-similar Markov processes.

 \begin{remark} The results of Theorem  \ref{thm_exp_functional} can be easily extended to the boundary case $q=0$. The exponential
 functional $I_0=\int_0^{\infty} \exp(X_t) \d t$ is well-defined if $\e[X_1]<0$ (see \cite[Theorem 1]{BYS}). One can check that condition $\psi'(0)=\e[X_1]<0$ implies 
 $\hat \zeta_1(q) \to \zeta_1(0)=0$ as $q\to 0^+$, moreover  $q/\hat \zeta_1(q)\to |\e[X_1]|$. 
 Thus the constant $C=C(q)$ defined by \eqref{def_C} converges as $q\to 0^+$ to 
 \beqq
 C(0)=\frac{1}{|\e[X_1]|} \prod\limits_{n\ge 1} \frac{1+\frac{1}{\hat \rho_n}}{1+\frac{1}{\hat \zeta_{n+1}(0)}}. 
 \eeqq
 Note that $\hat \zeta_{n+1}(0)>\hat \rho_n>0$ for $n\ge 1$, thus the above product is well-defined, it converges
 due to Lemma \ref{lemmaMain}.
  The random variables $X(\tilde \rho, \tilde \zeta)$ and $X(\zeta,\rho)$ are also well-defined in the limit $q\to 0^+$, provided that we
  identify  $B_{(1,0)}\stackrel{d}{=} 1$.
 \end{remark}
 
  \begin{remark} 
The factorization of the exponential functional as a product of two independent random variables  in \eqref{Iq_distribution_identity} is in fact a very general phenomenon. It turns out that in many cases the exponential functional $I_q$ has the same distribution as a product of an exponential functional of a negative of a subordinator and an independent exponential functional of a spectrally positive process, both of these processes being related to the Wiener-Hopf factors of the original L\'evy process $X$, see 
\cite{Patie_Pardo_Savov,Patie_Savov}. Also, we would like to point out that recently Patie and Savov 
\cite{Patie_Savov2} have obtained some very strong and general results on the Mellin transform of the exponential functional. In particular, they show that the Mellin transform can be obtained as a generalized Weierstrass product in terms of the Wiener-Hopf factors of the process $X$. Since the Wiener-Hopf factors of the meromorphic process are known to be infinite products of linear factors (see formula \eqref{WH_fact_1} and
\cite[Theorem 2]{KuzKyPa2011}), these results could lead to an alternative proof of Theorem \ref{thm_exp_functional}. The Mellin transform of $I_q$ can be expressed 
as a double infinite product, interchanging the order in this product and using Weierstrass infinite product representation
for the gamma function one could obtain formula \eqref{def_f}, probably at the cost of considerable technical details.  
 \end{remark}


\section{Numerical examples}\label{section_numerics}


For our numerical examples we will consider a theta process, defined by the Laplace exponent 
\beq\label{def_theta}
\psi(z)=\frac{\sigma^2}{2} z^2 + \mu z +\gamma +(-1)^j &\bigg[ &c_1 \pi  \left(\sqrt{(\alpha_1-z)/\beta_1}\right)^{2j-1} \coth \left( \pi  \sqrt{(\alpha_1-z)/\beta_1} \right) \\
&+&c_2 \pi  \left(\sqrt{(\alpha_2+z)/\beta_2}\right)^{2j-1} \coth \left( \pi  \sqrt{(\alpha_2+z)/\beta_2} \right)\bigg].
\eeq
These processes were introduced in \cite{Kuz2010b} as examples of meromorphic processes for which the Laplace exponent can be computed in a particularly simple form. The parameter $j$ can take two values $j \in \{1,2\}$, the parameter $\gamma$ is always chosen so that 
$\psi(0)=0$. Other restrictions on the parameters are 
\beqq
\sigma\ge 0, \; \mu \in \r, \; c_i \ge 0, \; \alpha_i\ge 0, \; \beta_i>0. 
\eeqq
As was shown in \cite{Kuz2010b}, the density of the L\'evy measure of a theta process is given by \eqref{def_pi}, with parameters 
$a_n$, $\hat a_n$, $\rho_n$, $\hat \rho_n$ defined as follows
\beq\label{def_rho_a}
\rho_n=\alpha_1+\beta_1 n^2, \; \hat \rho_n=\alpha_2+\beta_2 n^2, \; a_n \rho_n=2 c_1 \beta_1 n^{2j}, \;
\hat a_n \hat \rho_n=2 c_2 \beta_2 n^{2j}\hat \rho_n, \;\;\; n\ge 1.   
\eeq

For our numerical experiments, we will consider the following two parameter sets  
\beq\label{def_parameter_sets}
&&\textnormal{Parameter set 1:}  \; \; j=1, \; \sigma=0.1, \\ \nonumber
&&\textnormal{Parameter set 2:}  \; \; j=2, \; \sigma=0.0,  
\eeq
with the remaining parameters being fixed at
\beqq
\mu=0.1, \; c_1=0.15, \; c_2=0.3, \; \alpha_1=\alpha_2=1.5, \; \beta_1=\beta_2=2. 
\eeqq
The parameter set 1 defines a process with a non-zero Gaussian component and jumps of infinite activity but finite variation, while the
parameter set 2 defines a process with zero Gaussian component and jumps of infinite variation (see \cite[Proposition 4]{Kuz2010b}).

 
\subsection{Approximating the Mellin transform of the exponential functional}\label{subsection_distribution_Iq}

 
First we will discuss the problem of computing the Mellin transform and the density of the exponential functional. This algorithm will be useful later for our discussion on pricing of Asian options. 
We assume that $q>0$ and denote the density of $I_q$ by $p(x)$. 
 We start by expressing $p(x)$ as an inverse Mellin transform of
 $\mm(s)=\e[I_q^{s-1}]$
\beq\label{p_Mellin_transform}
p(x)=\frac{x^{-c}}{2\pi} \int\limits_{\r} \mm(c+\i u) e^{-\i u \ln(x)} \d u,
\eeq 
where $c$ can be any number in the interval $(0,1+\zeta_1)$. 
We see that there are two main issues in computing $p(x)$.  In order to compute the Fourier integral in the right-hand side of \eqref{p_Mellin_transform}, we will use Filon's method.   
This  algorithm is well-known and we will refer the reader to \cite{Filon, Fosdick, Iserles}. 
However, before we can apply Filon's method, we need to be able to compute $\mm(s)$ (given by  an 
infinite product 
\eqref{def_f}) to a reasonably high precision. 

A naive way of computing the Mellin transform $\mm(s)$  would be to simply 
truncate the infinite products in 
\eqref{def_C} and \eqref{def_f}  at $n=N$. After rearranging the terms, this would give us an approximation 
 \beq\label{def_Mn}
\mm_N(s):=
a_N \times b_N^{s-1} \times
 \prod\limits_{n=1}^N
 \frac{\Gamma(\hat \rho_{n-1}+s)}
{\Gamma(\hat \zeta_{n}+s)}
\frac{\Gamma(\zeta_n+1-s)}
{\Gamma(\rho_n+1-s)}
 \eeq
 where
 \beqq
 b_N:=\frac{1+\hat \rho_N}{q} \prod\limits_{n=1}^{N} \frac{\zeta_n\hat \zeta_n}{\rho_n\hat \rho_n}
 \eeqq
 and $a_N$ is a normalizing constant, uniquely determined by the condition $\mm_N(1)=1$. 
 According to Theorem \ref{thm_exp_functional}, $\mm_N(s) \to \mm(s)$ as $N\to +\infty$. The problem is that the convergence could be quite slow, and we need to find a way to accelerate it. 
 
 Let us define the remainder term $R_N(s)=\mm(s)/\mm_N(s)$. It is clear hat $R_N(s)$ 
 is the tail of the infinite product in  \eqref{def_f}. Instead of replacing $R_N(s)$ by 
 1, let us try to do a better job of approximating this function. 
 
  Using Theorems  \ref{theorem_main} and \ref{thm_exp_functional} one can see that 
  $R_N(s)$ is the Mellin transform of some random variable, which we will denote by $\epsilon^{(N)}$. 
  This random variable can be obtained by taking the tail in the infinite products \eqref{def_X} and \eqref{def_C}, however the exact form of $\epsilon^{(N)}$ is not important. What is important is that we can compute exactly the moments  $m_k:=\e[(\epsilon^{(N)})^k]$. 
  Note that the function $R_N(s)=\e[(\epsilon^{(N)})^{s-1}]$ is analytic in the strip $-\hat \rho_N < \re(s)< 1+\zeta_{N+1}$, therefore the moments $m_k$ are finite for all $k<1+\zeta_{N+1}$. 
  Using the functional equation $\mm(s+1)=s\mm(s)/(q-\psi(s))$, 
  we find
 \beq\label{moments_epsilon_N} 
 m_k=\e[(\epsilon^{(N)})^k]=R_N(k+1)=\frac{\mm(k+1)}{\mm_N(k+1)}=
 \frac{k!}{\mm_N(k+1)} \prod\limits_{j=1}^{k} \frac{1}{q-\psi(j)},
 \eeq 
 and these quantities can be readily computed (in case if one of $\psi(j)$ is equal to zero or infinity, one can still compute $m_k$ via L'H\^ospital's rule).
 
 Our plan is to replace $\epsilon^{(N)}$ (whose distribution we can not compute explicitly) by a simple random variable $\xi$, with known distribution and Mellin transform, so that the first two moments of $\xi$ match the first two moments of $\epsilon^{(N)}$. We will take $\xi$ to be a beta random variable of the second kind, which is defined by its density
 \beqq
 \p(\xi \in \d x)=\frac{\Gamma(a)\Gamma(b)}{\Gamma(a+b)} y^{a-1}(1+y)^{-a-b} \d y, \;\;\; y>0. 
 \eeqq
 where the parameters $a$ and $b$ must be positive. The Mellin transform of $\xi$ is given by
 \beqq
 \e[\xi^{s-1}]=\frac{\Gamma(a+s-1)\Gamma(b+1-s)}{\Gamma(a)\Gamma(b)}. 
 \eeqq
 One can check that if we define the parameters
 \beq\label{def_a_b}
 a=m_1 \frac{m_1+m_2}{m_2-m_1^2}, \;\;\; b=1+\frac{m_1+m_2}{m_2-m_1^2},
 \eeq
 then we have $\e[\xi]=m_1$ and $\e[\xi^2]=m_2$. 
 
 Let us summarize the algorithm for approximating the Mellin transform $\mm(s)$. We set  $N$ to be a large number (large enough so that the condition $\zeta_{N+1}>1$ is satisfied). Then $m_1$ and $m_2$ are finite, and 
 we compute these numbers using formula \eqref{moments_epsilon_N}. We evaluate the parameters $a$ and $b$ via \eqref{def_a_b} and approximate the Mellin transform $\mm(s)$ by
 \beq\label{Mellin_correction_term}
 \mm(s) \approx \mm_N(s) \frac{\Gamma(a+s-1)\Gamma(b+1-s)}{\Gamma(a)\Gamma(b)}.
 \eeq
 We emphasize, that the right-hand side of \eqref{Mellin_correction_term} is the Mellin transform of a random variable $I_q^{(N)}$, which converges to $I_q$ in distribution as $N\to +\infty$, and which has the same first two moments as $I_q$ if the latter exist, and ``analytically continued" moments if the classical moments do not exist. 

 \begin{figure}
\centering
\subfloat[]
{\label{fig4a}\includegraphics[height =4.5cm]{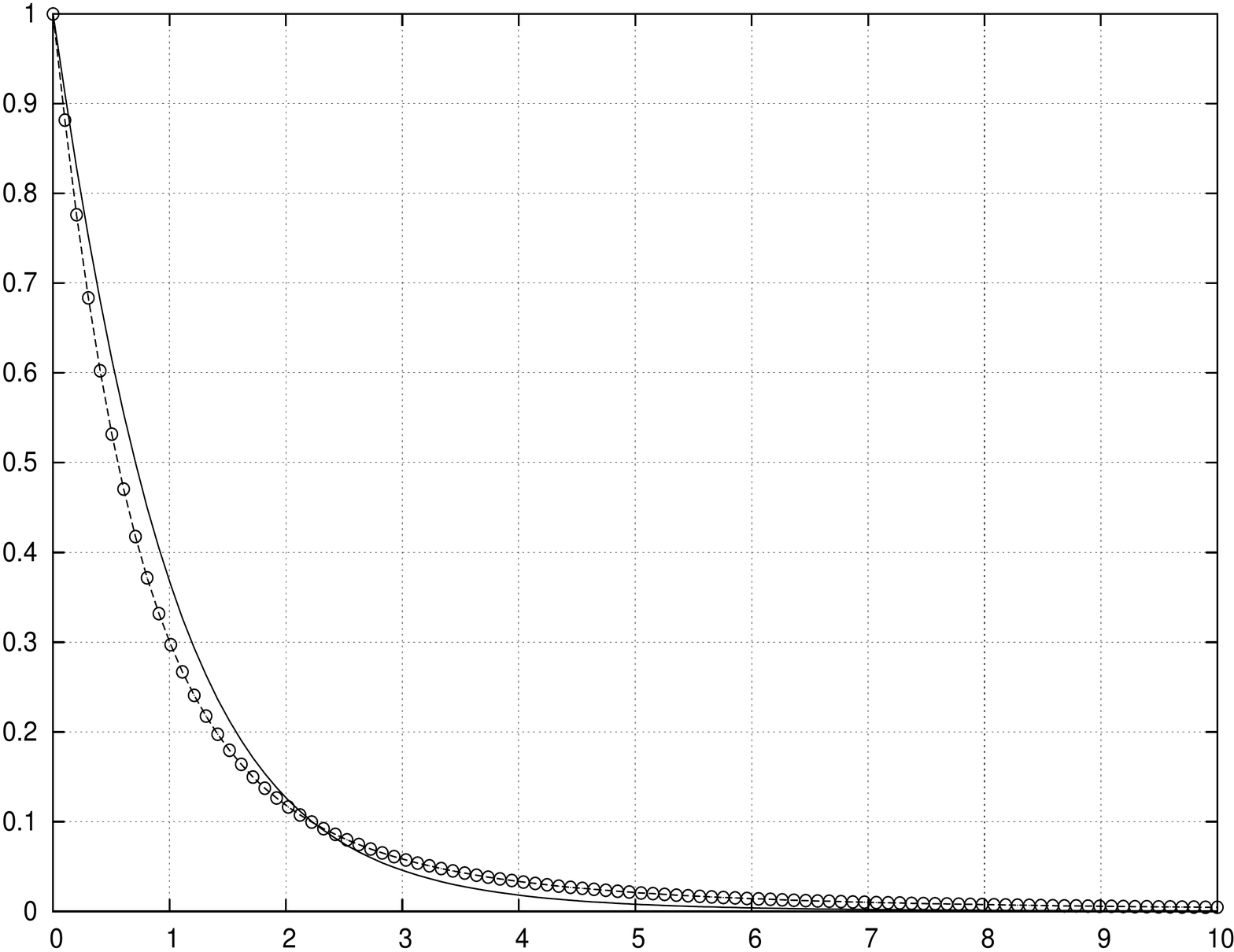}} 
\subfloat[]
{\label{fig4b}\includegraphics[height =4.5cm]{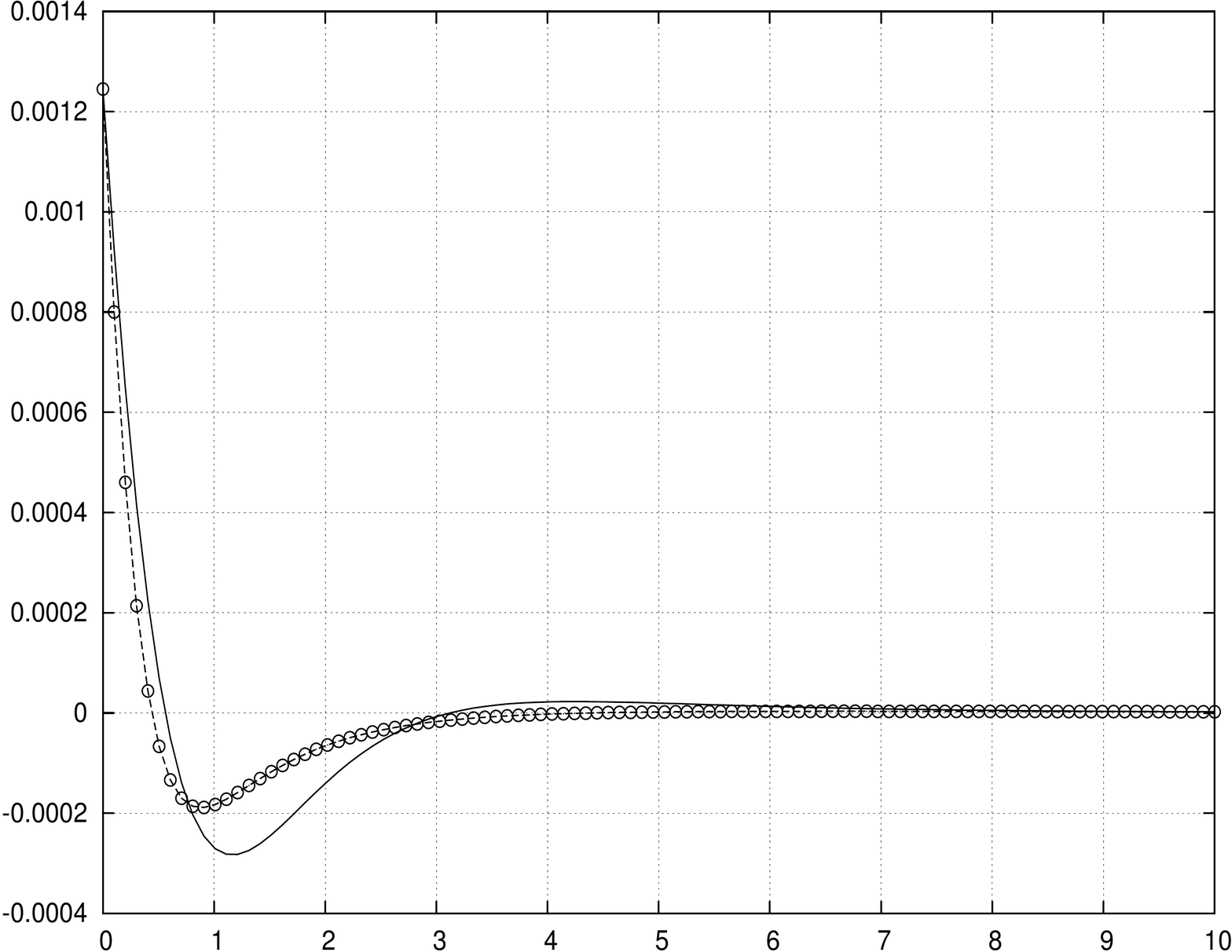}}  
\subfloat[]
{\label{fig4b}\includegraphics[height =4.5cm]{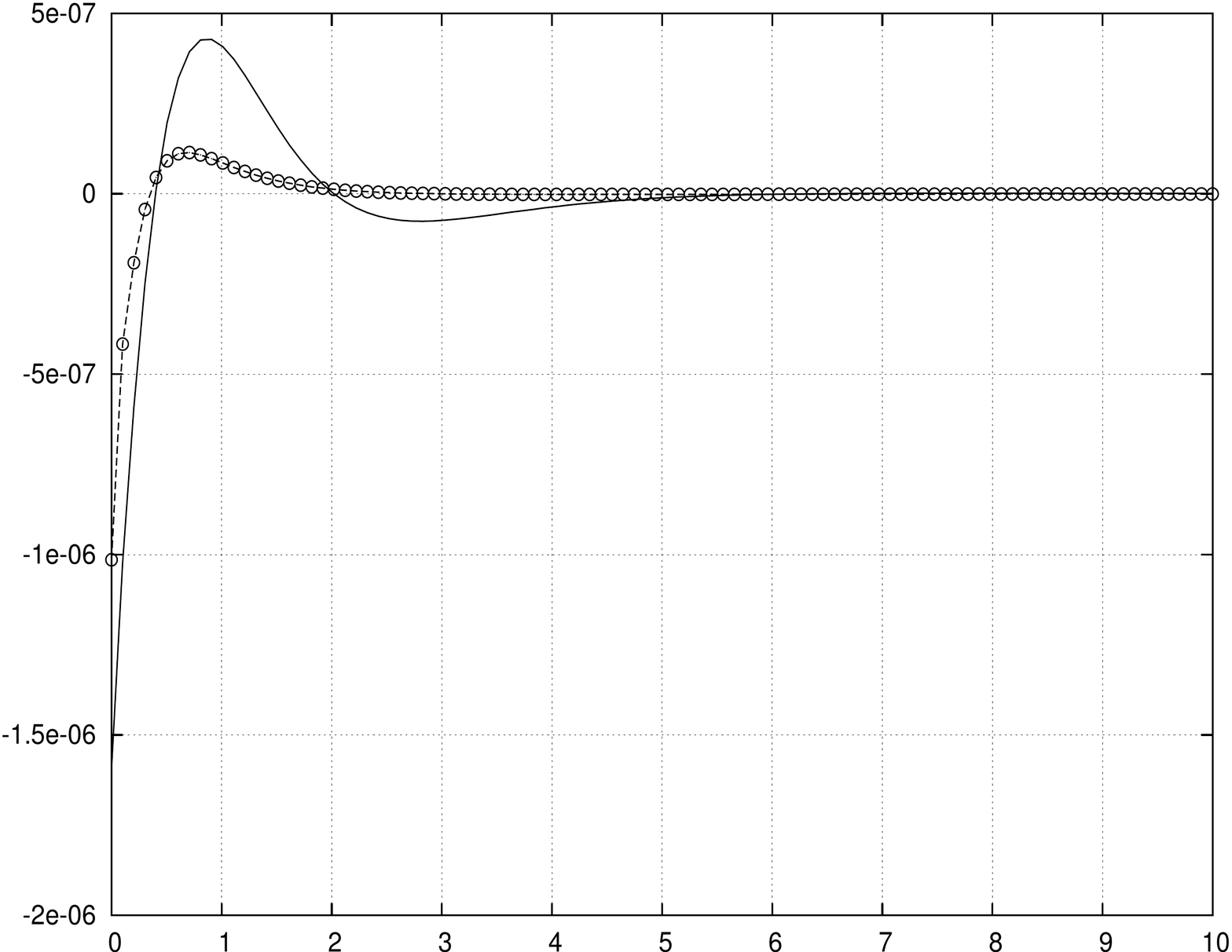}}  
\caption{(a) The density of the exponential functional $I_1$
with $N=400$ (the benchmark).  (b) The error with $N=20$ (no correction). (c) The error with $N=20$ (with correction term). Solid line (resp. circles) represent parameter set I (resp. II).
  } 
\label{fig1}
\end{figure}

We illustrate the efficiency of this approximation by a numerical example. We compute the density of the exponential functional $I_1$ for the two parameter sets \eqref{def_parameter_sets}, 
using approximation \eqref{def_Mn} with  $N=400$. The graphs are shown on figure \ref{fig1}(a), and we will take these results as our benchmark. 
Then we calculate the density (using the same approximation \eqref{def_Mn}) with $N=20$, the error between these results and our benchmark 
is shown on figure \ref{fig1}(b).  We see that the maximum error is of the order $\textnormal{1.0e-3}$. Finally, we perform the same calculation with $N=20$, but now we use the approximation for the Mellin transform
with the correction term \eqref{Mellin_correction_term}. On figure \ref{fig1}(c) we see that
the maximum error is of the order $\textnormal{1.0e-6}$, therefore our approximation  
\eqref{Mellin_correction_term} decreases the error by a factor of 1000, and it seems to be very efficient.


\subsection{Computing the price of  an Asian option}\label{subsection_Asian_options}

We recall that the stock price process is defined as a geometric L\'evy process $S_t=S_0e^{X_t}$ (where $X$ is a L\'evy process started from zero and $S_0$ is the initial stock price). We
are working under risk-neutral measure, so that the process $S_t \exp(-r t)$ is a martingale under measure $\p$). This condition can be expressed in terms of the Laplace exponent of the process $X$ by the equation $\psi(1)=r$ (note that a condition $\rho_1>1$ is necessary to ensure $\psi(1)<+\infty$). In order to satisfy this risk-neutral condition, we will choose accordingly the value of $\mu$ (which is responsible for the linear drift of the process) in formula \eqref{def_theta}. The price of a fixed strike Asian option is given by the expectation 
\eqref{Asian_option}. Let us introduce the function
\beqq
f(k,t):=\e\left[ \left(\int_{0}^t e^{X_u} \d u - k\right)^+\right].
\eeqq
Since $C(S_0,K,T)=\exp(-r T) \times S_0 \times f(K/S_0,T)$, we can study an equivalent problem of computing numerically the values of the function $f(k,t)$.

 Our first goal is to connect the function $f(k,t)$ with the Mellin transform of the exponential functional
 $I_q$. This result was given in 
\cite[Theorem 2.1]{CaiKou2010}, however we reproduce the main steps for the sake of completeness.  We see that 
for any $q>r$ 
\beq\label{hkq_Laplace}
h(k,q):=q\int_0^{\infty} e^{-q t} f(k,t) \d t= 
\e\left[ \left(I_q - k\right)^+\right],
\eeq 
where $\ee(q)$ is an exponential random variable with mean $1/q$ (independent of $X$). Note that $h(k,q)<+\infty$ for $q>r$, since $\e\left[ \left(I_q - k\right)^+\right]<\e[I_q]=(q-r)^{-1}$.
Next, let us check that for $q>r$ we have $\zeta_1>1$. This is clear, since $\zeta_1$ is the smallest positive solution to the equation $\psi(z)=q$, and we have the risk-neutral condition $\psi(1)=r$. Since $\psi(z)$ is convex and $\psi(0)=0$, we find that 
$\psi'(1)>0$, thus the solution to equation $\psi(z)=q$ must be greater than $1$.  Having established that $\zeta_1>1$, we verify that for
$q>r$ and $0<s<\zeta_1-1$ we have
\beq\label{hkq_Mellin}
\int_{0}^{\infty} h(k,q) k^{s-1} \d k&=&\e\left[ \int_0^{\infty} \left(I_q - k\right)^+ k^{s-1} \d k\right]
\\ \nonumber
&=&\e\left[ \int_0^{I_q} \left(I_q - k\right) k^{s-1} \d k\right]
=\frac{\e \left[ I_q^{s+1} \right]}{s(s+1)}=\frac{\mm(s+2,q)}{s(s+1)},
\eeq
where in the first step we have used Fubini's theorem. Note that the right-hand side is finite for $0<s<\zeta_1-1$, since 
$\mm(s)$ is finite in the strip $\re(s) \in (0,1+\zeta_1)$ (see Theorem \ref{thm_exp_functional}). 
The two formulas \eqref{hkq_Laplace} and \eqref{hkq_Mellin} are the foundations for our algorithm to compute 
the price of an Asian option.

\vspace{0.2cm}

\noindent
{\bf Algorithm 1: approximating the Mellin transform of the exponential functional.}

\noindent
This algorithm is based on inverting Laplace and Mellin transform in 
\eqref{hkq_Laplace} and \eqref{hkq_Mellin} and approximating $\mm(s)$ by the algorithm presented in 
section \ref{subsection_distribution_Iq}. First, 
for $d_2>r$ and $q=d_2+\i u$ we compute $h(k,q)$ as the inverse Mellin transfom
\beq\label{hkq_inverse_Mellin}
h(k,q)=\frac{k^{-d_1}}{2\pi} \int\limits_{\r} \frac{\mm(d_1+\i v +2,q)}{(d_1+\i v)(d_1+\i v+1)} e^{-\i v \ln(k)} \d v,
\eeq
where $d_1 \in (0,\zeta_1(d_2)-1)$. Second, we compute $f(k,t)$ as the inverse Laplace transform, which can be rewritten as the cosine transform 
\beq\label{fkt_inverse_Laplace}
f(k,t)=\frac{2 e^{d_2 t}}{\pi}  \int\limits_{0}^{\infty} \re\left[\frac{h(k,d_2+\i u)}{d_2+\i u}\right] \cos(u t) \d u.
\eeq

We set $d_1=d_2=0.25$ and truncate the integral in \eqref{hkq_inverse_Mellin} (resp. \eqref{fkt_inverse_Laplace}) so that the domain of integration is  $-100<v<100$  (resp. $0<u<200$), and use Filon's method
\cite{Filon, Fosdick} with 400 discretization points to evaluate each of these integrals. 
The Mellin transform is computed using the approximation algorithm presented in section
 \ref{subsection_distribution_Iq}, with the Mellin transform truncated at $N$ terms
 (we will set $N\in \{10,20,40,80\}$ in our computations). Computing the Mellin transform requires 
 computing $2N$ solutions to the equation $\psi(z)=q$. An efficient algorithm of how to find these solutions
 for both real and complex values of $q$ was presented in \cite{Kuz2010a}.

We would like to point out that our algorithm requires computing the values of $\mm(s,q)$ for complex values of $q$, while Theorem \ref{thm_exp_functional} was established only for $q>0$. 
To extend the results of Theorem \ref{thm_exp_functional} for complex values of $q$, the main step would be to establish the uniform convergence of the infinite product in the right-hand side of 
 \eqref{def_f} for complex $q$, for which we will need some additional information on the behavior of the solutions 
 $\zeta_n$, $-\hat \zeta_n$ 
 to the equation $\psi(z)=q$ for complex $q$. A rigorous discussion of this question is beyond the scope of the present article, and we will leave it to future work.

\vspace{0.2cm}

\noindent
{\bf Algorithm 2: approximating theta-process by a hyper-exponential process.}

\noindent
Our second algorithm is based on approximating the theta-process $X$ by a hyper-exponential process $\tilde X=\tilde X^{(N)}$, for which the Mellin transform of the exponential functional can be computed explicitly. 
There is a natural way of obtaining such an approximation for any meromorphic process: We can simply truncate both infinite series defining the density of the  L\'evy measure in \eqref{def_pi} at $N$ terms (see \cite{Crosby} for another approximation technique). This procedure gives us a L\'evy process $\tilde X$ with hyper-exponential jumps, whose Laplace exponent is given by
 \beq\label{eq_tilde_psi}
 \tilde\psi(z)=\frac 12 \tilde \sigma^2 z^2 + \tilde\mu z
 +z^2 \sum\limits_{n=1}^N\frac{a_n }{\rho_n(\rho_n- z)}+z^2 \sum\limits_{n=1}^N \frac{\hat a_n}{\hat \rho_n (\hat \rho_n+z)},
 \eeq
see \eqref{eq_psi} for comparison. 
In our case, the coefficients $a_n$, $\rho_n$, $\hat a_n$ and $\hat \rho_n$ are defined by
equation \eqref{def_rho_a}. 
We will choose $\tilde \sigma$ so that the variance of $\tilde X_t$ matches the variance of $X_t$, which is equivalent to requiring $\tilde \psi''(0)=\psi''(0)$. The parameter $\tilde \mu$ is then specified by enforcing the risk-neutral condition $\tilde \psi(1)=r$. 

Following this simple procedure we obtain a sequence of hyper-exponential processes $\tilde X^{(N)}$, which will converge in Skorohod's topology to the process $X$. Now we can compute the price of the Asian option, with the driving process $\tilde X$, following the same procedure as in algorithm 1. The only difference is that the Mellin transform of the exponential functional $\tilde I_q=\int_0^{\ee(q)}\exp(\tilde X_t) \d t$ 
is computed as
\beqq
\tilde \mm(s,q)=\e \left[ \tilde I_q^{s-1} \right]=a\times \left(\frac{\tilde\sigma^2}{2} \right)^{1-s} \times \Gamma(s) \times 
\frac{\prod\limits_{j=1}^{N} \Gamma(\hat \rho_j+s)}{\prod\limits_{j=1}^{N+1} \Gamma(\hat\zeta_j+s) }
\times \frac{\prod\limits_{j=1}^{N+1} \Gamma(1+\zeta_j-s)}{\prod\limits_{j=1}^N \Gamma(1+\rho_j-s) },
\eeqq
where $a=a(q)$ is chosen so that $\tilde \mm(1,q)=1$. This expression 
for the Mellin transform was obtained in  \cite{CaiKou2010}. Here $\zeta_n$ and $-\hat \zeta_n$ are solutions to 
the equation $\tilde\psi(z)=q$, and since $\tilde \psi(z)$ is a rational function, it is easy to see that there will be exactly $N+1$ positive and $N+1$ negative solutions to $\psi(z)=q$ (see \cite{CaiKou2010} for all details). 

\vspace{0.2cm}

\noindent
{\bf Algorithm 3: Monte-Carlo simulation.}

\noindent
We will also check the accuracy of the previous two algorithms by computing the price of the Asian option by a simple Monte-Carlo simulation. 
We approximate the theta-process $X=\{X_t\}_{0\le t \le T}$ by a random walk $Z=\{Z_n\}_{0\le n\le 400}$ with $Z_0=0$ and the increment $Z_{n+1}-Z_n\stackrel{d}{=}X_{T/400}$. The price of the Asian option is approximated then by the following expectation
\beqq
e^{-r T} \e \left[\left( \frac{1}{400} \sum_{n=1}^{400} S_0e^{Z_n} - K\right)^+ \; \right],
\eeqq
which we estimate by sampling $10^6$ paths of the random walk. 
In order to sample from the distribution of $Y:=Z_{n+1}-Z_n$, we compute its density $p_Y(x)$ 
via the inverse Fourier transform 
\beqq
p_Y(x)=\frac{1}{2\pi} \int\limits_{\r} \e\left[e^{\i z Y}\right] e^{-\i z x} \d z,
\eeqq
where $\e\left[e^{\i z Y}\right]=\e\left[e^{\i z X_{T/400}}\right]=\exp\left((T/400)\psi(\i z)\right)$
and the Laplace exponent $\psi(z)$ is given by \eqref{def_theta}. Again, in order to compute the inverse Fourier transform, we use Filon's method with $10^6$ discretization 
points.

\vspace{0.2cm}

We compute the price of the Asian option with the initial stock price $S_0=100$, 
interest rate $r=0.03$, maturity $T=1$ and strike price $K=105$.  
We consider the two theta-processes defined by parameter sets I and II (see \eqref{def_parameter_sets}). 
Note that the parameter $\mu$ is not equal to $0.1$ anymore, as it has to be determined by the risk-neutral 
condition $\psi(1)=r$.  The results of our computations are presented in tables \ref{table1} and \ref{table2}. 
The code was written in Fortran90, and all computations were performed on a basic laptop 
(CPU: Intel Core i5-2540M 2.60GHz).

%
%
%
\begin{table} 
\centering
\begin{tabular}{| c | c | c | c |c|}
\hline
\rule{0pt}{3ex}   
$N$  & Algorithm 1, price  & Algorithm 1, time (sec.) & Algorithm 2, price & Algorithm 2, time (sec.) \\ [0.5ex]
\hline\hline
\rule{0pt}{2ex}  
10 & 4.724627 & 1.6 & 4.720675 & 1.2 
\\
\rule{0pt}{1ex}  
20 & 4.727780 & 2.8 & 4.728032 & 1.8 
\\
\rule{0pt}{1ex}  
40 & 4.728013 & 4.8 & 4.728031 & 3.4 
\\
\rule{0pt}{1ex}  
80 & 4.728029 & 9.2 & 4.728031 & 7.1 
\\
\hline
\end{tabular}
\caption{The price of the Asian option, parameter set I. The Monte-Carlo estimate of the price
is 4.7386 with the standard deviation 0.0172. The exact price is 4.72802$\pm$1.0e-5.}
\label{table1}
\centering
\vspace{0.3cm}
\begin{tabular}{| c | c | c | c |c|}
\hline
\rule{0pt}{3ex}   
$N$  & Algorithm 1, price  & Algorithm 1, time (sec.) & Algorithm 2, price & Algorithm 2, time (sec.)  \\ [0.5ex]
\hline\hline
\rule{0pt}{2ex}  
10 & 10.620243 & 1.6 & 10.621039 & 1.2 \\
\rule{0pt}{1ex}  
20 & 10.620049 & 3.0 & 10.620171 & 2.2 
\\
\rule{0pt}{1ex}  
40 & 10.620037 & 4.8 & 10.620054 & 3.6 
\\
\rule{0pt}{1ex}  
80 & 10.620036 & 9.6 & 10.620039 & 7.4 
\\
\hline
\end{tabular}
\caption{The price of the Asian option, parameter set II. The Monte-Carlo estimate of the price
is 10.6136 with the standard deviation 0.0251. The exact price is 10.62003$\pm$1.0e-5.}
\label{table2}
\end{table}

The results presented in tables \ref{table1} and \ref{table2} show that
both algorithm 1 and algorithm 2 perform very well, and seem to converge quickly to the true value of the option. 
The exact price was computed with $N=160$ and 1600 discretization points for the two integrals in
\eqref{hkq_inverse_Mellin},  \eqref{fkt_inverse_Laplace}. By experimenting with other values of the above parameters, we are convinced that these values are correct to withing 1.0e-5.  
Both of these algorithms are very efficient; the CPU time seems to be comparable with the results of Cai and Kou on hyper-exponential processes (see \cite{CaiKou2010}). 
Note that there is a substantial difference between algorithm 1 and algorithm 2, as one is based on 
approximating the Mellin transform of the exponential functional, and the other is based on
approximating the underlying L\'evy process by a hyper-exponential process, for which the Mellin transform
of the exponential functional can be computed explicitly. Yet the results produced by both of these algorithms agree 
up to five decimal digits, which is a good indicator that these results are indeed correct. 
The algorithm based on Monte-Carlo computation also produces consistent results, 
however these estimates are much less accurate and require CPU time on the order of several minutes. 

We have performed similar numerical experiments for many other values of parameters
of the underlying theta-process, as well as for different maturities and different strike prices. 
Qualitatively, the results seem to be consistent with the ones presented in tables \ref{table1} and \ref{table2}. 
Algorithm 2 is very efficient for the parameter set I, and gives high accuracy even for relatively small values of 
$N$. This should not be surprising, as in this case we have a theta-process with jumps of finite variation (but infinite activity), and it is intuitively clear that processes with compound Poisson jumps 
can provide a good approximation for such processes. On the other hand, parameter set II  corresponds to a theta-process with jumps of infinite variation and zero Gaussian component, and here our approximation by a compound Poisson process with a non-zero Gaussian component is bound to be less precise. 
Nevertheless, algorithm 2 works quite well in all cases, and, in our opinion, it may be a preferred
 method to compute prices of Asian options for any meromorphic processes (though testing this algorithm 
 on other meromorphic processes, such as beta-processes \cite{Kuz2010a}, would be worthwhile). 
 
 Algorithm 1 has comparable performance (though it is a little slower than algorithm 2). It has one potential advantage compared with algorithm 2. Imagine that 
 we want to compute the price of the Asian option for $N=20$ and then to check whether we have sufficient accuracy by doing the same computation with $N=40$. Algorithm 2 would require re-computing everything, since the Laplace exponent $\tilde \psi$ of the hyper-exponential process will change, thus all 
 the numbers $\zeta_n$, $-\hat \zeta_n$ (the solutions 
 to equation $\tilde \psi(z)=q$) will be different.  This is not the case for algorithm 1: 
here the Laplace exponent $\psi(z)$ does not depend on $N$, thus the roots $\zeta_n$ and $\hat \zeta_n$ 
with $1\le n \le 20$ will not be affected.  Therefore,  we only need to compute 
 the new roots for $21\le n \le 40$, which will need fewer computations. 
 The same idea can be applied for the evaluation of the Mellin transform, where the results for $N=20$ 
 in \eqref{def_Mn}
 can be stored in memory, and only the remaining finite product of gamma functions with $21\le n \le 40$ 
have to be evaluated.



\end{document}